\documentclass[reqno]{amsart}
\usepackage[foot]{amsaddr}
\usepackage{graphicx}
\usepackage{subcaption}  

\usepackage{hyperref,url}

\usepackage{amssymb}
\usepackage{amscd}
\usepackage{amsthm}
\usepackage{setspace}
\usepackage[dvipsnames]{xcolor}
\usepackage{enumerate}
\usepackage{graphicx}
\usepackage{mathtools}
\usepackage{tcolorbox} 
\usepackage{siunitx}
\usepackage{tikz-cd}
\usepackage{bm,blkarray}
\usepackage{float}
\numberwithin{equation}{section}
\usepackage{biblatex}

\usepackage{mathtools}
\usepackage[tableposition=top]{caption}
\usepackage{booktabs,dcolumn}

\renewcommand\H{\mathbb{H}}

\newcommand\Z{\mathbb{Z}}

\newcommand{\quash}[1]{}

\renewcommand{\H}{\mathcal H}

\title{Posets of Hyper $(b,t)$-ary Partitions are Distributive Lattices}
\author{Elsa Selin Frankel}
\address{Department of Mathematics, Wellesley College,  Wellesley, MA 02481 USA.}
\email{ef110@wellesley.edu}
\subjclass[2020]{05A17.}
\keywords{distributive lattice, integer partitions, refinement order}

\newtheorem{theorem}{Theorem}[section]
\newtheorem{definition}[theorem]{Definition}
\newtheorem{lemma}[theorem]{Lemma}

\newtheorem{proposition}[theorem]{Proposition}

\newtheorem{remark}[theorem]{Remark}

\begin{document}

\begin{abstract}
    Posets of integer partitions, ordered by refinement, were criticized for unruly structural behavior by both Birkhoff and Ziegler. However, recent work of Propp, McConville, and Sagan demonstrated that such posets of \textit{hyperbinary partitions} are distributive, with posets of join irreducible elements isomorphic to the well-studied class of \textit{fence posets}. We provide a generalization of this result for \textit{hyper ($b,t$)-ary partitions}, where all parts are powers of some positive integer $b$, and multiplicities are restricted to $t\ge b$. Then, we show that posets of hyper $(b,t)$-ary partitions are indeed also distributive lattices. 
    
\end{abstract}

\maketitle

\section{Introduction}

Posets of integer partitions for some fixed positive integer $n$, ordered by refinement, were briefly introduced by Birkhoff \cite{MR227053} to provide an integer partition analog to the well-studied \textit{set partition lattice} $\Pi_n$. Combinatorial and topological results on $\Pi_n$ are extensively explored in both general and restricted contexts. Work of Calderbank, Hanlon, Robinson \cite{MR850222} and Sagan, Sundaram \cite{sagan2026orderedsetpartitionposets} looks at partitions with restricted block size; \cite{MR850222} focuses on set partitions with block side congruent to $i \bmod{k}$ and \cite{sagan2026orderedsetpartitionposets} explores partitions with block sizes divisible by some fixed $d>2$. Other work of Sundaram \cite{Sun94b, MR1310588}, more generally describes applications of $\Pi_n$. Unlike their set partition lattice counterparts, both Birkhoff and Ziegler \cite{MR847552} note that posets of integer partitions, ordered by refinement, are quite ``badly behaved,'' as such posets fail to be lattices for $n>4$. Ziegler  \cite{MR847552} exemplified the unruly nature of these posets, disproving a conjecture of Bj\"orner  \cite{MR570784} that they are Cohen-Macaulay for $n\ge 9$, and thus further pushing away from connections to $\Pi_n$. Presently, few positive results about the general case of these posest of integer partitions exist.\\

However, certain restricted cases later proved fruitful. Work of McConville, Propp, and Sagan \cite{MR5036743} highlight nice structural properties of a restricted case: posets of \textit{hyperbinary integer partitions}, ordered by refinement. Hyperbinary partitions of $n$ have all parts as some power of two, with multiplicities restricted to at most two. McConville, Propp, and Sagan show that these posets are in fact distributive lattices, with posets of join irreducible elements isomorphic to the well-known class of \textit{fence posets}. In our paper, we investigate a question posed in \cite{MR5036743}, on additional posets of restricted  integer partitions that admit a lattice structure, under refinement ordering. We find that posets of hyper $(b,t)$-ary partitions, where all parts are powers of some positive integer $b$ with multiplicity restricted by $t\ge b$ are distributive lattices under refinement ordering.\\ 


The generalization to $(b,t)$-ary partitions comes up through study of partition functions $p(n)$ in arithmetic combinatorics, by \cite{MR389751, MR1579066, MR3450137}. Dirdal introduced them in \cite{MR389751} through study of restricted $m$-ary partition functions $s_{m,q}(n)$, which count partitions of $n$ into parts that are powers of $m$ with multiplicities restricted by $q\ge m$. Specific study of the hyperbinary partition function $p_{2,2}(n)$ continues to offer intriguing insights, more famously known in correspondence to \textit{Stern's diatomic sequence} \cite{MR1579066}. For a careful review of historical work on these restricted partition functions and additive results, we refer the reader to the thesis of Blair \cite{MR3450137}.\\

In Section \ref{sec2} we introduce necessary background and notation for our work. Then, in Section \ref{sec3} we present our main results. First, we show that posets of hyper $(b,t)$-ary partitions are lattices, leaning on a lemma that restricts the number of valid refinements from a certain part of a chosen partition. We then finish up the proof by utilizing a 1990 result of Björner, Eidelman, Ziegler to show certain finite graded posets are lattices, by looking at pairs of elements that cover their meet. Finally, we move to establish distributivity using a forbidden sublattice argument, as described in Section \ref{sec2}. 


\section{Background}\label{sec2}

We define a \textit{partition} of a positive integer $n$ as $\lambda=\{\lambda_1,\hdots,\lambda_k\}$, written as $\lambda_1\ge\cdots\ge\lambda_k$ by convention, such that $\sum\lambda_i=n$. The set of all possible integer partitions of some $n$ will be denoted as $\overline{\H(n)}$. Compositions of $n$ are defined similarly, but each ordering of parts $\lambda_i$ is treated as a unique partition of $n$. We shall denote the set of integer compositions as $\H(n)$, with notation $\lambda=(\lambda_1,\hdots,\lambda_n)$ for individual partitions. Compositions will not be discussed extensively in this paper, but are noted for contrasting definitions.\\

Lattices may be defined from a poset $P$ if every pair of elements $x,y\in P$ has both a least upper bound $x\lor y$ and greatest lower bound $x\land y$. These elements are called the join and meet of the pair $x,y$ respectively. Once we have obtained a lattice, we can further define classes of lattices with desirable structure. We say a lattice $L$ is \textit{modular} when given $x,y,z\in L$ with $x\le y$ we have
\begin{align*}
    x\lor (z\land y)=(x\lor z)\land y.
\end{align*}
A more restricted notion is that of a \textit{distributive lattice} $L$ as a lattice, where the distributivity laws hold for joins and meets. Suppose $x,y,z\in L$, then we must have
\begin{align*}
    x\lor(y\land z) &= (x\lor y)\land(x\lor z)\\
    x\land(y\lor z) &= (x\land y)\lor(x\land z).
\end{align*}
For any given lattice $L$, it suffices to prove only one law. Beyond direct methods, we have the following forbidden sublattice argument to show a lattice is distributive, due to Gr\"atzer \cite{MR509213}.

\begin{theorem}[Gr\"atzer]
    A lattice $L$ is a distributive lattice if and only if $L$ does not contain the diamond lattice $M_3$ or the pentagon lattice $N_5$ as sublattices.
\end{theorem}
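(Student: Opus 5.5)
We prove the two directions separately. The forward direction uses the fact that a sublattice inherits all lattice identities. The converse goes through the intermediate notion of modularity defined above: first, forbidding $N_5$ forces modularity; then, in a modular lattice, forbidding $M_3$ forces distributivity.

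\emph{Necessity.} Let $M_3=\{0,a,b,c,1\}$ have three pairwise incomparable atoms, and let $N_5=\{0,a,b,c,1\}$ have $a<c$ and $b$ incomparable to both. In $M_3$ we have $a\land(b\lor c)=a\land 1=a$, while $(a\land b)\lor(a\land c)=0$. In $N_5$ we have $c\land(a\lor b)=c$, while $(c\land a)\lor(c\land b)=a\lor 0=a\neq c$. Meets and joins in a sublattice agree with those of $L$. Hence if $L$ contained either lattice as a sublattice, the distributive law would fail in $L$.

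\emph{Sufficiency, step 1: no $N_5$ implies modularity.} Suppose $x\le y$ but modularity fails for $x,y,z$. The inequality $x\lor(z\land y)\le(x\lor z)\land y$ always holds when $x\le y$, so failure means it is strict. Set $a=x\lor(z\land y)$ and $b=(x\lor z)\land y$, so $a<b$. We will show that $\{z\land y,\ a,\ b,\ z,\ x\lor z\}$ is a copy of $N_5$.

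\begin{itemize}
\item \emph{Meets with $z$.} We have $z\land y\le z\land a\le z\land b\le z\land y$, using $a,b\le y$ and $z\land y\le a$. Hence $z\land a=z\land b=z\land y$.
\item \emph{Joins with $z$.} Dually, $x\lor z\le z\lor a\le z\lor b\le x\lor z$, using $x\le a$ and $b\le x\lor z$. Hence $z\lor a=z\lor b=x\lor z$.
\item \emph{Incomparability.} If $z\le b$, then $z=z\land b=z\land y$, so $z\le a$ and $b=z\lor\ldots$ collapses: $b=(x\lor z)\land y\le x\lor z=a\lor z=a$, contradicting $a<b$. Symmetric arguments rule out $a\le z$ and $b\le z$, and $z\ge b$ or $a\ge z$ similarly force $a=b$.
\end{itemize}

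These facts make the five elements distinct and closed under meet and join, with the order relations of $N_5$. This contradicts the hypothesis, so $L$ is modular.

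\emph{Sufficiency, step 2: modular with no $M_3$ implies distributivity.} Suppose the distributive law fails for some $x,y,z$. Define
\begin{align*}
d &= (x\land y)\lor(y\land z)\lor(z\land x),\\
u &= (x\lor y)\land(y\lor z)\land(z\lor x).
\end{align*}
One always has $d\le u$. The first step is to show that failure of distributivity forces $d<u$. This uses the fact, valid in modular lattices, that distributivity is equivalent to the self-dual identity $d=u$.

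Next put
\begin{align*}
a=(x\land u)\lor d,\qquad b=(y\land u)\lor d,\qquad c=(z\land u)\lor d.
\end{align*}
We then verify, using modularity repeatedly, that
\begin{align*}
a\land b=b\land c=c\land a=d \quad\text{and}\quad a\lor b=b\lor c=c\lor a=u.
\end{align*}
For instance, $a\land b$ is computed by pulling $d$ outside via the modular law (valid since $d\le b$) and then simplifying the meet inside $u$. Together with $d<u$, these identities force $a,b,c$ to be distinct and strictly between $d$ and $u$. So $\{d,a,b,c,u\}$ is a sublattice isomorphic to $M_3$, a contradiction.

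The main obstacle is this last verification: the modular-law manipulations proving $a\land b=d$ and $a\lor b=u$. It is a few lines per identity but requires choosing the right order in which to apply modularity. I would first establish the pair of identities for $a,b$; the other pairs then follow by the symmetry of the definitions in $x,y,z$.
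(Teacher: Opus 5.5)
The paper gives no proof of this statement. It is quoted from Gr\"atzer and used as a black box, together with Theorem 2.2, so there is no argument to compare against. What you give is the classical Dedekind--Birkhoff proof, and it is correct.

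Your step 1 is exactly the nontrivial half of Theorem 2.2. So your decomposition (no $N_5$ gives modularity, then modular plus no $M_3$ gives distributivity) is the same two-stage structure the paper relies on in its final theorem.

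In step 1 the incomparability paragraph is garbled, but every case closes:
\begin{itemize}
\item $z\le a$ reduces to $z\le b$, which you handle.
\item $b\le z$ gives $b=z\land b=z\land y\le a$.
\item $a\le z$ gives $x\lor z=z\lor a=z$, hence $b=z\land y\le a$.
\end{itemize}
Distinctness of the five elements then follows from incomparability.

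The two facts you defer in step 2 both go through, and each takes one line.

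\emph{Reduction to $d<u$.} For the fixed triple, $x\land u=x\land(y\lor z)$, since $x\le x\lor y$ and $x\le z\lor x$. The modular law applied with $(x\land y)\lor(x\land z)\le x$ gives $x\land d=(x\land y)\lor(x\land z)$. So $d=u$ would make the distributive law hold for that very triple, and failure forces $d<u$.

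\emph{The identity $a\land b=d$.} After your first move, $a\land b=(x\land u\land b)\lor d=(x\land b)\lor d$, using $b\le u$. Observe that $y\land u=y\land(z\lor x)$. Absorbing $x\land y$ and $y\land z$ into $y\land(z\lor x)$ gives $b=(y\land(z\lor x))\lor(z\land x)$. By modularity this equals $(y\lor(z\land x))\land(z\lor x)$. Hence $x\land b=x\land(y\lor(z\land x))=(x\land y)\lor(x\land z)\le d$, and so $a\land b=d$.

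\emph{The join identities.} These need no separate computation. Since $d\le u$, modularity gives $a=(x\lor d)\land u$, and likewise for $b$ and $c$. So the definitions of $a,b,c$ are self-dual under $d\leftrightarrow u$. The modular law is itself self-dual, so $a\lor b=u$ and its analogues are the duals of the meet identities.
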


\begin{figure}[H]
    \begin{subfigure}{0.3\textwidth}
        \centering
        \begin{tikzpicture}
            \node[shape=circle,draw=black, fill=black, inner sep=2pt] (1) at (0,0) {};
            \node[shape=circle,draw=black, fill=black, inner sep=2pt] (2) at (0,1) {};
            \node[shape=circle,draw=black, fill=black, inner sep=2pt] (3) at (-1,1) {};
            \node[shape=circle,draw=black, fill=black, inner sep=2pt] (4) at (1,1) {};
            \node[shape=circle,draw=black, fill=black, inner sep=2pt] (5) at (0,2) {};
    
            \draw[-] (1) -- (2);
            \draw[-] (1) -- (3);
            \draw[-] (1) -- (4);
            \draw[-] (2) -- (5);
            \draw[-] (3) -- (5);
            \draw[-] (4) -- (5);
        \end{tikzpicture}
    \caption{Diamond Lattice $M_3$}
    \label{fig:subim1}
    \end{subfigure}
    \begin{subfigure}{0.3\textwidth}
        \centering
        \begin{tikzpicture}
            \node[shape=circle,draw=black, fill=black, inner sep=2pt] (1) at (0,0) {};
            \node[shape=circle,draw=black, fill=black, inner sep=2pt] (2) at (-1,0.5) {};
            \node[shape=circle,draw=black, fill=black, inner sep=2pt] (3) at (1,1) {};
            \node[shape=circle,draw=black, fill=black, inner sep=2pt] (4) at (-1,1.5) {};
            \node[shape=circle,draw=black, fill=black, inner sep=2pt] (5) at (0,2) {};
            
            \draw[-] (1) -- (2);
            \draw[-] (1) -- (3);
            \draw[-] (2) -- (4);
            \draw[-] (3) -- (5);
            \draw[-] (4) -- (5);
        \end{tikzpicture}
    \caption{Pentagon Lattice $N_5$}
    \label{fig:subim2}
    \end{subfigure}
    \caption{Forbidden Subposets for Distributive Lattices}
    \label{fig:figure2}
\end{figure}

Note that every distributive lattice is also modular, so a piece of the forbidden sublattice argument can be described for solely modular lattices.

\begin{theorem}[Gr\"atzer]
    A lattice $L$ is modular id and only if $L$ does not contain $N_5$ as a sublattice.
\end{theorem}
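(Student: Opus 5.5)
Since distributive lattices are already characterized in the preceding theorem by the absence of $M_3$ and $N_5$, it is enough to show the two directions of the modular statement separately; the sufficiency direction is the substantive one.

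The necessity direction is immediate. If $L$ is modular and contains a pentagon sublattice $0<a<c<1$ with $b$ incomparable to $a$ and $c$, $a\wedge b=c\wedge b=0$ and $a\vee b=c\vee b=1$, then apply the modular law with $x=a\le y=c$ and $z=b$. We get $a\vee(b\wedge c)=a\vee 0=a$, while $(a\vee b)\wedge c=1\wedge c=c$. Since $a\neq c$, this is a contradiction. Joins and meets in a sublattice agree with those in $L$, so the computation is legitimate inside $L$.

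For sufficiency, I will argue by contraposition: suppose $L$ is not modular and build an $N_5$. First I note that for $x\le y$ the inequality
\[
x\vee(z\wedge y)\le (x\vee z)\wedge y
\]
always holds, since both $x$ and $z\wedge y$ lie below $x\vee z$ and below $y$. So failure of modularity gives $x\le y$ and $z$ with $a:=x\vee(z\wedge y)<c:=(x\vee z)\wedge y$.

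The candidate pentagon is $\{z\wedge y,\ a,\ c,\ z,\ x\vee z\}$, or more symmetrically, bottom $z\wedge c$ and top $z\vee a$. I then check the required identities:
\begin{itemize}
\item $z\wedge c=z\wedge(x\vee z)\wedge y=z\wedge y$.
\item $z\wedge a\ge z\wedge y$, since $z\wedge y\le a$. Also $z\wedge a\le z\wedge c=z\wedge y$, since $a\le c$. Hence $z\wedge a=z\wedge c$.
\item Dually, $z\vee a=z\vee x\vee(z\wedge y)=x\vee z$, since $z\wedge y\le z$.
\item $z\vee c$ lies between $z\vee a$ and $x\vee z$, since $c\le x\vee z$. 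So $z\vee c=z\vee a$.
\end{itemize}

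Finally I rule out degeneracies, which is the only delicate point. If $z\le c$, then $z\le y$, so $z\wedge y=z$ and $a=x\vee z\ge c$, a contradiction. If $a\le z$, then $a\le z\wedge y\le a$, so $a=z\wedge y$, and then $c=c\wedge(z\vee a)$ gives $c\le z$, which is again impossible. The remaining comparabilities are excluded by similar short arguments. Hence the five elements are distinct, closed under meet and join, and form $N_5$.

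The main obstacle is this bookkeeping of distinctness. The lattice identities themselves are one-line absorption computations.
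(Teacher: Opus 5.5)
Your proof is correct: it is the classical Dedekind pentagon argument, with pentagon $\{z\wedge y,\ x\vee(z\wedge y),\ (x\vee z)\wedge y,\ z,\ x\vee z\}$, whereas the paper gives no proof of this statement and only cites Gr\"atzer. Two small points of polish. First, the opening appeal to the distributive characterization plays no role. Second, the step ``$c\le z$ is impossible'' deserves its one line, $c=z\wedge c=z\wedge y\le a<c$. With that line in place, every remaining degeneracy ($z\le a$, $z\wedge y\in\{a,z\}$, $x\vee z\in\{c,z\}$) reduces, via the identities you proved, to one of the already excluded cases $z\le c$, $a\le z$, $c\le z$.
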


\section{Lattice of Hyper $(b,t)$-ary Partitions}\label{sec3}

\begin{definition}[Birkhoff]
    We say integer partitions $\lambda,\mu\in\H(n)$ are ordered by refinement $\mu\le\lambda$ if you can obtain all parts of $\mu$ from $\lambda$ by combining parts $\lambda_i$.
\end{definition}

Note that Birkhoff defines integer partition refinement ordering with a slight difference to ours -- his posets will be the dual of refinement posets on $\overline{\H(n)}$ in this paper. Orellana, Saliola, Schilling, and Zabrocki \cite{MR4895059} define a closely related ordering on integer partitions, which results in a lattice. However, recall that Birkhoff's posets will notably not be lattices, as we see in Figure \ref{fig:H(5)}. The main issue at play is the presence of multiple covers for the pair of partitions $\lambda=\{4,1\}$ and $\lambda'=\{3,2\}$, which primarily arises since the part $\lambda_1=4$ can be refined in two distinct ways, one of which coincides with the partition obtained from the unique refinement of part $\lambda'_1=3$. Therefore, if we should want to think of a class of restricted partitions that admit a lattice under refinement ordering, we ought to consider partitions where refinements of any given part are unique. One such class of restricted partitions will be the primary focus of our paper.

\begin{figure}[H]
    \centering
    \begin{tikzpicture}
        \node[shape=rectangle,draw=none, inner sep=3pt] (1) at (0,0) {$\{5\}$};
        
        \node[shape=rectangle,draw=none, inner sep=3pt] (2) at (-1,1) {$\{4,1\}$};
        \node[shape=rectangle,draw=none, inner sep=3pt] (3) at (1,1) {$\{3,2\}$};

        \node[shape=rectangle,draw=none, inner sep=3pt] (4) at (-1,2.25) {$\{3,1,1\}$};
        \node[shape=rectangle,draw=none, inner sep=3pt] (5) at (1,2.25) {$\{2,2,1\}$};

        \node[shape=rectangle,draw=none, inner sep=3pt] (6) at (0,3.25) {$\{2,1,1,1\}$};
        
        \node[shape=rectangle,draw=none, inner sep=3pt] (7) at (0,4.25) {$\{1,1,1,1,1\}$};
            
        \draw[-] (1) -- (2);
        \draw[-] (1) -- (3);

        \draw[-] (2) -- (4);
        \draw[-] (2) -- (5);
        \draw[-] (3) -- (4);
        \draw[-] (3) -- (5);

        \draw[-] (4) -- (6);
        \draw[-] (5) -- (6);

        \draw[-] (6) -- (7);
    \end{tikzpicture}
    \caption{Refinement Poset for $\overline{\H(5)}$}
    \label{fig:H(5)}
\end{figure}

\begin{definition}[Hyper $(b,t)$-ary partitions] 
    Let $b,t\in\Z_{>0}$ with $t\ge b$. An integer partition $\lambda\in\H(n)$ is a hyper $(b,t)$-ary partition if all parts are powers of $b$, each with multiplicity at most $t$.
\end{definition}

We shall denote the set of all hyper $(b,t)$-ary partitions as $\H_{b,t}(n)$, for positive integers $t\ge b$. These partitions can be considered as a generalization to the well-studied \textit{hyperbinary partitions}, motivated by recent work of Propp, McConville, and Sagan \cite{MR5036743}. They describe how hyperbinary partitions admit a lattice structure under refinement ordering, and we shall show that the general hyper $(b,t)$-ary partitions also admit a lattice structure, beginning with the following lemma.

\begin{lemma}
    For a hyper $(b,t)$-ary partition $\lambda\in\H_{b,t}(n)$ with some part $\lambda_i$, either $\lambda_i$ can be refined to obtain a unique hyper $(b,t)$-ary partition $\mu\in\H_{b,t}$, or $\lambda_i$ cannot be refined to obtain a hyper $(b,t)$-ary partition at all.
\end{lemma}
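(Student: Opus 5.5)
The plan is to first make precise what ``refining the part $\lambda_i$'' means, and then show that there is only one candidate refinement, so the lemma reduces to checking whether that candidate satisfies the multiplicity bound. Write $\lambda_i = b^k$. Refining $\lambda_i$ means replacing this single part by a multiset of powers of $b$, each strictly smaller than $b^k$, summing to $b^k$, while leaving every other part of $\lambda$ untouched. We only care about refinements that are one step below $\lambda$ in the refinement order, i.e.\ refinements that cannot be factored through an intermediate refinement. Under this reading, the statement says the following. Either the partition
\[
\mu = \bigl(\lambda \setminus \{b^k\}\bigr) \cup \{\underbrace{b^{k-1},\dots,b^{k-1}}_{b}\}
\]
lies in $\H_{b,t}(n)$, or no refinement of $\lambda_i$ alone lies in $\H_{b,t}(n)$.

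The first key step is a factorization claim. Any multiset $S$ of powers of $b$, each at most $b^{k-1}$, with sum $b^k$, can be partitioned into $b$ blocks each summing to exactly $b^{k-1}$. I would prove this greedily. List the elements of $S$ in decreasing order and accumulate them into a block until the running sum reaches $b^{k-1}$. Every element divides $b^{k-1}$, and each prefix sum taken in decreasing order is a multiple of the next element. Hence the running sum never jumps past a multiple of $b^{k-1}$, and it hits $b^{k-1}$ exactly. Consequently every refinement of $\lambda_i$ is obtained by first splitting $b^k$ into $b$ copies of $b^{k-1}$ and then further refining some of those copies. So the only refinement of $\lambda_i$ that can be a cover is $\mu$, which proves uniqueness. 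The case $k=0$ (the part $1$) is trivial, since $1$ has no refinement.

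The second step is validity. Let $m_{k-1}$ be the multiplicity of $b^{k-1}$ in $\lambda$. The partition $\mu$ is hyper $(b,t)$-ary exactly when $m_{k-1}+b\le t$, because every other multiplicity is unchanged. The main obstacle is the remaining case, where $\mu$ fails this bound but some deeper refinement of $\lambda_i$ (one that further splits some copies of $b^{k-1}$) might still be hyper $(b,t)$-ary. Such a refinement would not pass through a partition of $\H_{b,t}(n)$ via $\mu$, so it could threaten the ``unique or none'' dichotomy.

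To handle this case, I would first try a carrying argument. Take such a deeper refinement $\nu$, and compare $\nu$ with $\mu$ level by level, using the fact that $t\ge b$. The aim is to show that some level $j<k$ must overflow in $\nu$, whenever the level $k-1$ overflowed in $\mu$. If that carrying argument fails in general, I would fall back on restricting the lemma explicitly to single-step splits. That restricted form is exactly what the subsequent lattice argument on covers needs.
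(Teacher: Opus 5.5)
Your Step 1 is sound. The greedy argument correctly shows that every refinement of the part $b^k$ into smaller powers of $b$ factors through the single split $\mu$, which settles the cover form of the lemma whenever $m_{k-1}+b\le t$. The gap is the overflow case, and the carrying argument you propose there cannot work, because its conclusion is false. Take $b=t=2$ and $\lambda=\{4,2\}$. Splitting the $4$ gives $\mu=\{2,2,2\}\notin\H_{2,2}(6)$. Yet refining the $4$ alone as $2+1+1$ gives $\nu=\{2,2,1,1\}\in\H_{2,2}(6)$, and no level overflows. So your restated dichotomy (``either $\mu\in\H_{b,t}(n)$ or no refinement of $\lambda_i$ alone lies in $\H_{b,t}(n)$'') is false.

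The fallback does not repair this. Restricted to single-step splits, the lemma becomes a tautology. The later cover arguments (the lattice and distributivity theorems), however, use it to conclude that every \emph{cover} of $\lambda$ in $\H_{b,t}(n)$ is a single split. In the overflow case $\mu$ is not in the poset, so factoring through $\mu$ no longer prevents a deeper refinement like $\nu$ from being a cover. The paper's proof leaves the same hole: it checks $m_{k-1}\le t-b$ and rules out only splittings into $a$ equal parts with $a$ not a power of $b$. Your Step 1 is already the more complete uniqueness argument.

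The missing idea is to prove that such a $\nu$, although valid, is never a cover. You do this by exhibiting an element of $\H_{b,t}(n)$ strictly between $\lambda$ and $\nu$; in the example this is $\{4,2\}<\{4,1,1\}<\{2,2,1,1\}$. In general:

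\begin{itemize}
\item Let $S$ be the multiset replacing $b^k$ in $\nu$, with $c_j$ copies of $b^j$. Validity of $\nu$ together with $m_{k-1}+b>t\ge b$ gives $c_{k-1}<b$ and $m_{k-1}\ge 1$. Hence the smallest piece $b^e$ of $S$ has $e\le k-2$.
\item Since $b^k$ and all pieces larger than $b^e$ are divisible by $b^{e+1}$, $b$ divides $c_e$. Hence $c_e\ge b$ and $m_e+b\le m_e+c_e\le t$.
\item Let $k'$ be the least index in $\{e+1,\dots,k-1\}$ with $m_{k'}\ge 1$; it exists because $m_{k-1}\ge 1$. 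Splitting one part $b^{k'}$ of $\lambda$ is then legal: either $k'=e+1$ and $m_e+b\le t$, or $m_{k'-1}=0$ and $b\le t$.
\item The pieces of $S$ below $b^{k'}$ have total a positive multiple of $b^{k'}$. Your greedy lemma therefore extracts from them $b$ blocks of sum $b^{k'-1}$. The rest of $S$, together with the untouched part $b^{k'}$ of $\lambda$, sums to $b^k$.
\item So the split partition $\lambda'$ satisfies $\lambda<\lambda'<\nu$. The inequalities are strict because $\lambda'$ still has $m_k$ parts equal to $b^k$ while $\nu$ has $m_k-1$.
\end{itemize}

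With this step the lemma holds in its cover form: the only cover of $\lambda$ obtained by refining $\lambda_i$ is $\mu$, and it exists exactly when $m_{k-1}+b\le t$.
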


\begin{proof}
    Let $\lambda\in\H_{b,t}(n)$ be a partition, and let $\mu$ be the partition obtained by refining some part $\lambda_i=b^i$ into $b$ parts of size $b^{i-1}$. If the multiplicity of parts with size $b^{i-1}$ in $\lambda$ is no more than $t-b$, then we would have that $\mu\in\H_{b,t}(n)$; otherwise there would be no such valid refinement at $\lambda_i$, as we exceed the multiplicity limit $t$. Now, we must show that there is no other refinement of $\lambda_i$ that could result in a hyper $(b,t)$-ary partition. Consider a positive integer $a\neq b^j$ for any $0\le j<i$, and notice that while a partition $\nu\in\H(n)$ can indeed be obtained from refining $\lambda_i$ into $a$ parts of size $b^i/a$, we find $\nu\notin\H_{b,t}(n)$ since $b^i/a$ is not a power of $b$.
\end{proof}

Notably $\H_{b,t}(n)$ is finite and bounded, with minimal element $\hat{0}\in\H_{b,t}(n)$ as the $b$-ary representation of $n$. The maximum element $\hat{1}\in\H_{b,t}(n)$ will be the longest partition of $n$ that can be expressed to respect the restrictions of both $b$ and $t$.

\begin{lemma}
    The longest hyper $(b,t)$-ary partition in $\H_{b,t}(n)$ is unique.
\end{lemma}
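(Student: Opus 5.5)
We encode a hyper $(b,t)$-ary partition $\lambda\in\H_{b,t}(n)$ by its multiplicity vector $(m_0,m_1,m_2,\hdots)$, where $m_j\in\{0,1,\hdots,t\}$ is the number of parts equal to $b^j$. Then $n=\sum_j m_j b^j$, and the length of $\lambda$ is $\sum_j m_j$. The plan is to show that any partition of maximal length is forced coordinate by coordinate, starting from $m_0$. The basic move is a local exchange: if some $m_j$ with $j\ge 1$ is positive while $m_{j-1}\le t-b$, the preceding lemma lets us refine one part $b^j$ into $b$ parts $b^{j-1}$. This stays in $\H_{b,t}(n)$ and raises the length by $b-1\ge 1$ (when $b\ge2$; the degenerate case $b=1$ has only the part $1$, so everything is trivial). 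Hence a longest partition admits no such move. That is, for every $j\ge1$ with $m_j>0$ we have $m_{j-1}>t-b$.

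Next I determine $m_0$. Since $n\equiv m_0 \pmod b$, $m_0$ lies in the residue class $r=n \bmod b$. Within $\{0,\hdots,t\}$ the admissible values are $r, r+b, r+2b,\hdots$. I claim a longest partition has $m_0$ equal to the largest such value $M_0\le t$ that is also $\le n$. Suppose $m_0<M_0$. Then $m_0\le M_0-b\le t-b$. Some higher part must exist, since otherwise $m_0=n\ge M_0$. Take the smallest $j\ge1$ with $m_j>0$. If $j=1$, the exchange above applies directly. If $j>1$, then $m_{j-1}=0\le t-b$, so we refine $b^j$ into $b$ copies of $b^{j-1}$, and we repeat this cascade down to index $1$. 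Each step stays valid because the intermediate multiplicities are at most $b\le t$. This strictly increases the length, a contradiction. So $m_0$ is forced.

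Finally I induct. Once $m_0$ is fixed, the remaining parts, each divided by $b$, form a hyper $(b,t)$-ary partition of $n'=(n-m_0)/b$. The total length is $m_0$ plus the length of this smaller partition, so maximizing the total length is the same as maximizing the length of a partition in $\H_{b,t}(n')$. Since $n'<n$ whenever $n>0$ and $b\ge2$, induction on $n$ gives uniqueness, with base case $n=0$ (the empty partition).

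The main obstacle is the step fixing $m_0$. The cascade argument must be checked carefully: the intermediate multiplicities need to stay within $t$, and one must also handle the boundary case $M_0>n$, where the constraint $m_0\le n$ binds. I would handle the cascade by noting that a single part $b^j$ can be fully refined into $b$ parts $b^{j-1}$, then one of those into $b$ parts $b^{j-2}$, and so on. Each intermediate level $1,\hdots,j-1$ then ends with multiplicity $b-1$ and level $0$ gains $b$, all of which stay within $t$ because $m_0+b\le t$.
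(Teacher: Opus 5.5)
Your proof is correct, and it takes a genuinely different route from the paper.

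The paper argues inside the poset. It supposes two distinct maximal elements $\mu,\mu'$ and compares the sequences of single-part refinements reaching them from a common lower element $\lambda$. It claims that a part refined on the way to $\mu'$ but not to $\mu$ could still be refined at $\mu$, contradicting maximality. It then uses Lemma 3.3 (each part refines in at most one way) to force $\mu=\mu'$.

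You instead argue arithmetically on multiplicity vectors. The exchange criterion forces $m_0=M_0$ in every longest partition, and removing the $1$'s and dividing by $b$ reduces the problem to $\H_{b,t}(n')$ with $n'<n$. Your route is fully rigorous and gives an explicit greedy description of the longest partition. The paper's central step, by contrast, is not really justified: whether a part $b^j$ can be refined depends on the current multiplicity of $b^{j-1}$, which may differ between the two refinement sequences.

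On the other hand, the paper's framing targets a stronger fact it needs later for boundedness: that the longest partition is the unique maximal element $\hat 1$. Your argument compares only lengths, so it does not give this by itself. It extends easily, though. If a partition admits no single-part refinement, your exchange criterion together with $t\ge b$ forces every multiplicity below the top index $k$ into $\{t-b+1,\dots,t\}$, which is a complete residue system modulo $b$. Moreover, $k\ge 1$ exactly when $n>t$. Hence $m_0=M_0$ again, and the same induction identifies such a partition with the longest one. Since every element reaches such a partition by length-increasing single refinements, it is $\hat 1$.

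Finally, the cascade you flag as the main obstacle is unnecessary. If the smallest $j\ge 1$ with $m_j>0$ exceeds $1$, a single refinement of $b^j$ is already valid because $m_{j-1}=0\le t-b$, and that one refinement already lengthens the partition.
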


\begin{proof}
    Suppose we instead do have two distinct maximal partitions $\mu,\mu'\in\H_{b,t}(n)$, and let $\lambda\in\H_{b,t}(n)$. Suppose we can obtain $\mu$ and $\mu'$ from $\lambda$ by refining distinct sequences of parts from $\lambda$, where there is some part $\lambda_i$ which is refined in $\mu'$ but not in $\mu$, without loss of generality. Then there exists a refinement of $\lambda_i$ from $\mu$ to obtain a partition $\nu\in\H_{b,t}(n)$ such that $\mu<\nu$, since the sequence taken to obtain $\mu'$ implies that a refinement at $\lambda_i$ exists. This is impossible, as $\mu$ is defined as maximal. Therefore, we cannot have any distinct parts in either refinement sequence -- in other words, our sequences are identical. However, we still have $\mu\neq\mu'$, which can only occur if some part $\lambda_i$ in our sequence can be refined in two distinct ways. This is impossible by Lemma 3.3, and therefore we cannot have distinct maximal elements $\mu\neq\mu'$, as desired.
\end{proof}

\begin{lemma}
    $\mathcal{H}_{b,t}(n)$ under refinement ordering is graded and of finite rank.
\end{lemma}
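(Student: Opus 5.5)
The plan is to produce an explicit rank function. For $\lambda\in\H_{b,t}(n)$ with $\lambda_i=b^{e_i}$, set
\[
\rho(\lambda)=\sum_i \frac{b^{e_i}-1}{b-1}\quad\text{(for $b\ge 2$)},
\]
or, equivalently up to an additive constant, $\rho(\lambda)=\ell(\lambda)-\ell(\hat 0)$ rescaled by $b-1$. The point is that every cover relation changes $\rho$ by exactly one.

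First I would pin down the cover relations. By Lemma 3.3, the only refinement of a part $b^{e}$ that can stay in $\H_{b,t}(n)$ is its replacement by $b$ copies of $b^{e-1}$. So I would show that $\lambda\lessdot\mu$ holds exactly when $\mu$ arises from $\lambda$ by one such elementary split. One direction is immediate: if $\mu$ is obtained from $\lambda$ by an elementary split, the length grows by exactly $b-1$. No partition can lie strictly between, since any strict refinement of $\lambda$ has length at least $\ell(\lambda)+(b-1)$, with equality only for a single split.

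For the converse, suppose $\lambda<\mu$. I want a chain of elementary splits from $\lambda$ to $\mu$ that stays inside $\H_{b,t}(n)$. Each part $b^{e}$ of $\lambda$ is refined in $\mu$ into powers of $b$, so it decomposes through repeated $b$-fold splits. The difficulty is to order these splits so that intermediate partitions never exceed multiplicity $t$.

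I would perform the splits greedily from smallest parts upward, or alternatively choose a part of $\lambda$ whose refinement in $\mu$ is nontrivial and split it once, obtaining $\lambda'$ with $\lambda<\lambda'\le\mu$. The task is then to check that $\lambda'$ respects the multiplicity bound, arguing with $t\ge b$ and the multiplicities in $\mu$. I expect this step to be the main obstacle. The natural choice is to split the smallest part $b^{e}$ of $\lambda$ that is refined in $\mu$. All copies of $b^{e-1}$ in $\lambda'$ then either come from $\lambda$'s parts of size $b^{e-1}$, which are unrefined and therefore appear in $\mu$, or from the new split. One must compare this with $\mu$'s count of parts of size at most $b^{e-1}$, arguing by contradiction that an overflow would force $\mu$ itself to violate the bound. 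If the count fails, I would instead choose a larger part.

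Given that every cover is an elementary split, every maximal chain from $\hat 0$ to $\hat 1$ (which exists by Lemma 3.4) increases length by exactly $b-1$ at each step. Hence all maximal chains have length $(\ell(\hat 1)-\ell(\hat 0))/(b-1)$, and the poset is graded with rank function $\rho(\lambda)=(\ell(\lambda)-\ell(\hat 0))/(b-1)$. Finite rank is automatic since $\H_{b,t}(n)$ is finite. The case $b=1$ is degenerate: all parts equal $1$ and the poset is a single point, which I would note separately.
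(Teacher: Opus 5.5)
You follow the paper's route: take $\rho(\lambda)=(\ell(\lambda)-\ell(\hat 0))/(b-1)$, the number of single $b$-fold splits from $\hat 0$. This is a rank function once every cover of the induced refinement order is a single split. (Minor: your first formula $\sum_i (b^{e_i}-1)/(b-1)=(n-\ell(\lambda))/(b-1)$ decreases along covers, so it is minus your later $\rho$ plus a constant.) The paper simply asserts that if $\lambda\lessdot\nu$ one ``takes one more refinement from a single part.'' You are right that this converse is the real content, but the argument you sketch for it fails.

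Your selection rule depends on a fixed grouping of the parts of $\mu$ into parts of $\lambda$, and the promised contradiction does not occur. Take $b=t=2$, $\lambda=\{4,2\}$, $\mu=\{2,2,1,1\}$ in $\H_{2,2}(6)$, grouped as $4=2+1+1$, $2=2$. The only refined part is $4$, and splitting it gives $\{2,2,2\}\notin\H_{2,2}(6)$. Yet $\mu$ respects the bound, because the refined $4$ went partly into $1$'s; counting $\mu$'s parts of size at most $b^{e-1}$ says nothing about the multiplicity of $b^{e-1}$. There is also no larger part to fall back on. The legal first step is to split the $2$, a part this grouping leaves untouched: $\{4,2\}\lessdot\{4,1,1\}\lessdot\{2,2,1,1\}$. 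The same happens for $t>b$: take $\lambda=\{8,4,4\}$, $\mu=\{4,4,2,2,2,1,1\}$ in $\H_{2,3}(16)$ with $8=2+2+2+1+1$. With the grouping $4=2+2$, $2=1+1$ your rule would work, but showing that a good grouping always exists is exactly the missing argument. Your fallback ``choose a larger part'' points the wrong way.

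Here is a grouping-free way to close the gap.
\begin{itemize}
\item For $j\ge 0$ put $a_j(\lambda)=b^{-j}\sum_{\lambda_i\ge b^j}\lambda_i$ (so $a_0=n$), and let $m_j(\lambda)$ be the multiplicity of $b^j$. Then $m_j=a_j-b\,a_{j+1}$, so the vector $(a_j)$ determines $\lambda$.
\item If $\mu$ refines $\lambda$, every part of $\mu$ of size $\ge b^j$ lies inside a part of $\lambda$ of size $\ge b^j$. Hence $a_j(\lambda)\ge a_j(\mu)$ for all $j$.
\item Splitting one part $b^j$ ($j\ge 1$) lowers $a_j$ by one and fixes every other $a_k$. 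The result stays in $\H_{b,t}(n)$ if and only if $m_j\ge 1$ and $m_{j-1}\le t-b$.
\item Now suppose $\lambda\ne\mu$ and $a(\lambda)\ge a(\mu)$ componentwise. Let $j$ be the least index with $a_j(\lambda)>a_j(\mu)$ and $m_j(\lambda)\ge 1$; necessarily $j\ge 1$. It exists: at the largest $q$ with $a_q(\lambda)>a_q(\mu)$ you get $m_q(\lambda)>m_q(\mu)\ge 0$.
\item If $a_{j-1}(\lambda)=a_{j-1}(\mu)$, then $m_{j-1}(\lambda)\le m_{j-1}(\mu)-b\le t-b$. Otherwise minimality forces $m_{j-1}(\lambda)=0\le t-b$, which is exactly where $t\ge b$ is used.
\item The split keeps $a\ge a(\mu)$ and lowers $\sum_j a_j$ by one. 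Iterating therefore reaches $a(\mu)$, that is $\mu$, through single splits inside $\H_{b,t}(n)$.
\end{itemize}
So every cover is a single split, and the rest of your length argument goes through.
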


\begin{proof}
    Define a rank function $\rho: \H_{b,t}(n)\to\Z_{\ge 0}$ so that $\rho(\lambda)$ is the number of refinements from individual parts between the minimal element $\hat{0}\in\H_{b,t}(n)$ and your chosen partition $\lambda\in\H_{b,t}(n)$. For instance, $\rho(\hat{0})=0$ and any refinement $\mu$ that can be obtained from a single refinement of a part in $\hat{0}$ has $\rho(\mu)=1$. Let $\lambda,\nu\in\H_{b,t}(n)$ be distinct partitions with $\lambda\le\nu$. Since $\nu$ is obtained from refining parts of $\lambda$, we find that $\rho(\lambda)\le\rho(\nu)$ as desired. Furthermore, if $\lambda\lessdot\nu$ then we simply take one more refinement from a single part of $\lambda$ to obtain $\nu$, so we additionally get that $\rho(\nu)=\rho(\lambda)+1$.\\

    To see why $\rho(\lambda)$ is well defined, let $\lambda\in\H_{b,t}(n)$ and define $C$ as a chain of length $k$ from $\hat{0}$ to $\lambda$. Suppose for a contradiction that we can construct another distinct chain $C'$ between $\hat{0}$ and $\lambda$ with length $m<k$. Now, recall from earlier that each refinement of some part $\hat{0}_i$ gives us $b$ new parts of size $\hat{0}_{i-1}$ and loses one part of size $\hat{0}_i$. Hence, the length of $\lambda$ can be written as $\ell(\lambda)=(b-1)k+\ell(\hat{0})$. However, this also implies that any sequence of $m$ refinements would give us a partition $\mu$ of length $\ell(\mu)=(b-1)m+\ell(\hat{0})<\ell(\lambda)$, so obtaining some $\mu=\lambda$ is impossible, and therefore we cannot construct our desired chain $C'$.
\end{proof}

\begin{theorem}[Björner, Eidelman, Ziegler, 1990]
    Let $P$ be a bounded poset of finite rank, where for any $x,y\in P$, if both $x$ and $y$ cover an element $z\in P$, then the join $x \lor y$ exists. Then $P$ is a lattice. 
\end{theorem}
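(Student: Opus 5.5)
We argue by downward induction over the poset: we show that any two elements lying above a common element have a join. Then we recover meets from joins and the bottom element. Since $P$ is bounded with $\hat{0}$, every pair $x,y$ lies above $\hat{0}$, so this gives all joins. For $z\in P$ let $h(z)$ denote the length of the longest chain from $z$ to $\hat{1}$. This is finite because $P$ has finite rank, and $h(z')<h(z)$ whenever $z<z'$. The statement to prove by induction on $h(z)$ is:
\[
(\ast)_z:\quad \text{for all } x,y\ge z,\ \text{the join } x\lor y \text{ exists}.
\]

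The base case $h(z)=0$ means $z=\hat{1}$, so $x=y=z$ and there is nothing to prove. For the inductive step, assume $(\ast)_{z'}$ for every $z'>z$, and take $x,y\ge z$. If $x=z$ then $x\lor y=y$, and symmetrically if $y=z$. Otherwise, using finite rank, choose covers $z\lessdot x'\le x$ and $z\lessdot y'\le y$. By hypothesis $w=x'\lor y'$ exists (if $x'=y'$ then $w=x'$). We now use the induction hypothesis twice.
\begin{enumerate}
\item Both $x$ and $w$ lie above $x'>z$, so $a=x\lor w$ exists by $(\ast)_{x'}$.
\item Since $y'\le w\le a$, both $a$ and $y$ lie above $y'>z$, so $c=a\lor y$ exists by $(\ast)_{y'}$.
\end{enumerate}

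We claim $c=x\lor y$. First, $c\ge a\ge x$ and $c\ge y$, so $c$ is an upper bound. Now let $u$ be any upper bound of $x$ and $y$. Then $u\ge x'$ and $u\ge y'$, so $u\ge w$. Hence $u\ge x$ and $u\ge w$, so $u\ge a$. Finally $u\ge a$ and $u\ge y$, so $u\ge c$. This completes the induction, and all pairwise joins exist.

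For meets, fix $x,y$. Their set $L$ of common lower bounds is nonempty because it contains $\hat{0}$. By finite rank, chains in $L$ are bounded, so $L$ has a maximal element $m$. Take any $\ell\in L$. Then $\ell\lor m$ exists and lies below both $x$ and $y$, since each of these is an upper bound of $\ell$ and $m$. So $\ell\lor m\in L$ and $\ell\lor m\ge m$, and maximality gives $\ell\lor m=m$, that is, $\ell\le m$. Thus $m=x\land y$. In the finite case of interest, $\H_{b,t}(n)$, one can instead simply take the join of the finitely many elements of $L$.

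The main obstacle is the inductive step. The difficulty is arranging the two joins so that each is taken over a pair with a common lower bound strictly above $z$; otherwise the induction would not apply. The key choice is to first join $x$ with $w=x'\lor y'$ and only then bring in $y$, which works because $y'\le w$. Verifying that $c$ is the least upper bound is then routine.
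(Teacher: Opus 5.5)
The paper gives no proof of this statement: it is quoted as a black box from Bj\"orner--Edelman--Ziegler, so there is no internal argument to compare against. Your proof is correct, and as far as I recall it is essentially the standard argument for this lemma. You induct downward on the length $h(z)$ of a longest chain from $z$ to $\hat{1}$. You use the hypothesis only once, to form $w=x'\lor y'$ for two covers $z\lessdot x'\le x$ and $z\lessdot y'\le y$. You then build $x\lor y$ as $(x\lor w)\lor y$, so that each of the two auxiliary joins is taken over a common lower bound ($x'$, respectively $y'$) strictly above $z$. Your check that $c$ is the least upper bound is correct.

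Your meet step is also correct. A maximal common lower bound $m$ absorbs every other common lower bound $\ell$, because $\ell\lor m$ is again a common lower bound. This step needs only bounded chain lengths, not finiteness of $P$. The same bounded-chain assumption is all you use elsewhere: for finiteness of $h$ and for the existence of the covers $x'$ and $y'$.
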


\begin{theorem}
    $\H_{b,t}(n)$ under refinement ordering is a lattice.
\end{theorem}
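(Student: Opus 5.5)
The plan is to apply the theorem of Björner, Eidelman and Ziegler (Theorem 3.6) directly. Boundedness and finite rank are already in hand. The minimum $\hat{0}$ is the $b$-ary representation of $n$, the maximum $\hat{1}$ is unique by Lemma 3.4, and Lemma 3.5 shows $\H_{b,t}(n)$ is graded of finite rank. So the work reduces to one claim: whenever $\mu$ and $\mu'$ both cover some $\lambda\in\H_{b,t}(n)$, the join $\mu\lor\mu'$ exists.

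First I will describe covers explicitly. By Lemma 3.3 and the rank computation in Lemma 3.5, a cover $\lambda\lessdot\mu$ replaces one part $b^i$ of $\lambda$ by $b$ copies of $b^{i-1}$. It is legal exactly when $\lambda$ has at most $t-b$ parts equal to $b^{i-1}$. Since the refinement of a part of a given size is unique, a cover is determined by the size $b^i$ being refined. Equal parts are interchangeable in a partition, so refining either of two copies of $b^i$ gives the same $\mu$. Hence if $\mu\neq\mu'$ both cover $\lambda$, they arise from refining parts of distinct sizes $b^i$ and $b^j$, say with $i>j$.

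Next I will produce the candidate join $\nu$, obtained from $\lambda$ by performing both refinements. There are two cases.

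\emph{Case $j\neq i-1$.} The two refinements change multiplicities of disjoint sets of sizes. Since $j\neq i-1$, the size $b^{j-1}$ is not $b^i$, and $b^{i-1}$ is not $b^j$. So each refinement stays legal after the other is performed, and $\nu\in\H_{b,t}(n)$ covers both $\mu$ and $\mu'$.

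\emph{Case $j=i-1$.} Refining $b^i$ raises the multiplicity of $b^{i-1}$ by $b$, and refining $b^{i-1}$ lowers it by one. The refinement of $b^{i-2}$ performed after $\mu'$ was legal in $\lambda$ and is unaffected by the other refinement. So I only need to check the count of $b^{i-1}$, which in $\nu$ is $m_{i-1}(\lambda)+b-1\le t-1$. This uses that refining $b^i$ was legal in $\lambda$. Again $\nu\in\H_{b,t}(n)$, with $\mu,\mu'\lessdot\nu$ at rank $\rho(\lambda)+2$.

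The step I expect to be the main obstacle is showing that $\nu$ is the \emph{least} upper bound, not merely an upper bound. I will argue as follows. Let $\pi$ be any common upper bound of $\mu$ and $\mu'$, so $\pi$ is reachable from $\mu$ by a sequence of covers. I want to show $\pi\ge\nu$. I will track the multiset of ``refinement moves by size'' needed to pass from $\lambda$ to $\pi$. A partition above $\lambda$ is determined by how many parts of each size have been split, since every split of $b^k$ yields $b$ copies of $b^{k-1}$. That count vector is fixed by $\pi$ and is independent of the chain, by the same length-counting idea as in Lemma 3.5 applied size by size. Because $\pi\ge\mu$ and $\pi\ge\mu'$, this vector has at least one split at size $b^i$ and at least one at size $b^j$. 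I then need to reorder a chain from $\lambda$ to $\pi$ so that these two splits come first, checking legality at each intermediate step. Here the multiplicity bound $t\ge b$ is what should allow moving splits of larger parts earlier.

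If the reordering becomes delicate, my fallback is different. I will instead show that any two upper bounds $\pi,\pi'$ of $\mu,\mu'$ at rank $\rho(\lambda)+2$ coincide with $\nu$, since there are only the two split sizes available. Then I will conclude via the form of Theorem 3.6 that needs only a unique minimal upper bound. Once the join exists for every such pair, Theorem 3.6 gives that $\H_{b,t}(n)$ is a lattice.
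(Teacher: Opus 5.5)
\textbf{Gap: the proposal never shows that $\nu$ is the least upper bound.}

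Your setup is the paper's. You make the same appeal to Theorem 3.6, build the same $\nu$ by performing both splits, and do the same count $m_{i-1}(\lambda)+b-1\le t-1$ in the adjacent case. All of that is correct. The gap is the step you flag yourself: nothing in the proposal shows that $\nu$ lies below \emph{every} common upper bound of $\mu$ and $\mu'$. Your main plan stops at ``should allow'', and that legality check is the whole content of the step.

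Your fallback does not repair it. Knowing that $\nu$ is the only common upper bound of rank $\rho(\lambda)+2$ does not make it the unique \emph{minimal} upper bound. Nothing excludes a minimal upper bound $\pi$ of larger rank with $\pi\not\ge\nu$, and bounded graded posets where two elements have a unique common cover but no join are easy to build. The paper's proof ends with exactly this uniqueness-of-the-common-cover inference, so it does not supply the missing step either.

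\textbf{How your count vector closes the gap.} No reordering is needed. Let $m_k(\pi)$ be the multiplicity of $b^k$ in $\pi$, and let $S_k(\pi)$ be the sum of the parts of $\pi$ of size at least $b^k$. Set $d_k(\alpha,\beta)=(S_k(\alpha)-S_k(\beta))/b^k$. This counts the splits at size $b^k$ from $\alpha$ to $\beta$. It is additive in the sense $d(\alpha,\gamma)=d(\alpha,\beta)+d(\beta,\gamma)$, it gives $d(\lambda,\mu)=e_i$ (the $i$th unit vector), and $m_k(\beta)=m_k(\alpha)-d_k+b\,d_{k+1}$.

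For partitions into powers of $b$, $\beta$ refines $\alpha$ if and only if $d_k(\alpha,\beta)\ge 0$ for all $k$.
\begin{enumerate}
\item[(i)] Necessity: each part of $\beta$ of size at least $b^k$ lies inside a part of $\alpha$ of size at least $b^k$.
\item[(ii)] Sufficiency: perform the $d_k$ splits from the largest size downward. This never runs out of parts, since $m_k(\beta)\ge 0$.
\end{enumerate}
Definition 3.1 compares the two partitions directly, so the intermediate partitions need not lie in $\H_{b,t}(n)$ and no legality check arises. Now $\pi\ge\mu,\mu'$ gives $d(\lambda,\pi)\ge e_i$ and $d(\lambda,\pi)\ge e_j$, hence $d(\lambda,\pi)\ge e_i+e_j=d(\lambda,\nu)$. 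So $d(\nu,\pi)\ge 0$, that is, $\pi\ge\nu$.

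\textbf{If you want legal chains.} Lemma 3.5's description of covers tacitly needs this. For $\alpha<\beta$, take a maximal run $p\le k\le q$ of indices with $d_k(\alpha,\beta)>0$. Then $\alpha$ has a part $b^q$. If splitting $b^k$ is illegal, then $m_{k-1}(\alpha)\ge t-b+1\ge 1$, so you may pass to $b^{k-1}$. The split at $b^p$ is legal, because $m_{p-1}(\alpha)=m_{p-1}(\beta)-b\,d_p\le t-b$. This is where $t\ge b$ enters.

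\textbf{What this route buys.} The vectors $d(\hat{0},\pi)$ are exactly the integer vectors with $d_0=0$ and $c_k+b\,d_{k+1}-t\le d_k\le c_k+b\,d_{k+1}$, where $c_k$ are the $b$-ary digits of $n$. These inequalities are preserved by componentwise max and min. So you get the lattice property, and distributivity, without Theorem 3.6.
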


\begin{proof}
    By Lemma 3.4, we can conclude that $\H_{b,t}(n)$ is bounded of finite rank. Consider $\lambda\in\H_{b,t}(n)$ and suppose two distinct partitions $\mu,\mu'\in\H_{b,t}(n)$ both cover $\lambda$. By Lemma 3.3, we know that each cover $\mu$ and $\mu'$ must be obtained from a single refinement of distinct parts $\lambda_i$ and $\lambda_j$ with $\lambda_i\neq\lambda_j$ respectively. Therefore, we can define a fourth partition $\nu$ that is obtained from then refining both $\lambda_j$ and $\lambda_i$ as in partitions $\mu$ and $\mu'$. To ensure that $\nu\in\H_{b,t}(n)$, it suffices to check our multiplicity condition. Since $i\neq j$, we know without loss of generality that a refinement of $\lambda_i$ only overlaps with $\lambda_j$ if $i=j+1$. Since $\mu\in\H_{b,t}(n)$ was obtained from a refinement at $\lambda_i$, it follows that the multiplicity of $\lambda_j$ will be at most $t-b$. Therefore the multiplicity of $\lambda_j$ in $\nu$ would be no more than $t-1$ after refinement at both parts. Otherwise if $|i-j|>1$, the assumption that $\mu,\mu'\in\H_{b,t}(n)$ implies that multiplicities of $\lambda_{i-1}$ and $\lambda_{j-1}$ do not exceed $t$, so that $\nu\in\H_{b,t}(n)$. Furthermore, since our partitions $\mu$ and $\mu'$ only differ by single refinements at $\lambda_i$ and $\lambda_j$ respectively, any cover of the pair must be obtained from refining at each part. Lemma $3.3$ tells us that there is only one way to refine $\lambda_i$ and $\lambda_j$ respectively, and consequentially $\nu$ is unique. By Theorem $3.5$, it finally follows that $\H_{b,t}(n)$ is a lattice.
\end{proof}

\begin{figure}[H]
    \[
    \begin{tikzpicture}{scale=0.75}
        \node[shape=rectangle,draw=none, inner sep=3pt] (1) at (0,0) {$\lambda$};
        \node[shape=rectangle,draw=none, inner sep=3pt] (2) at (-1,1) {$\mu$};
        \node[shape=rectangle,draw=none, inner sep=3pt] (3) at (1,1) {$\mu'$};
        \node[shape=rectangle,draw=none, inner sep=3pt] (4) at (0,2) {$\nu$};

        \node[shape=rectangle,draw=none, inner sep=3pt] (5) at (-0.7,0.3) {\scriptsize $\lambda_i$};
        \node[shape=rectangle,draw=none, inner sep=3pt] (6) at (-0.7,1.7) {\scriptsize $\lambda_j$};
        \node[shape=rectangle,draw=none, inner sep=3pt] (7) at (0.7,0.3) {\scriptsize $\lambda_j$};
        \node[shape=rectangle,draw=none, inner sep=3pt] (8) at (0.7,1.7) {\scriptsize $\lambda_i$};
            
        \draw[-] (1) -- (2);
        \draw[-] (1) -- (3);
        \draw[-] (3) -- (4);
        \draw[-] (2) -- (4);
    \end{tikzpicture}
    \]
    \caption{Hasse diagram of the subposet generated by $\lambda,\mu,\mu',\nu\in\H_{b,t}(n)$, with edges labeled by individual parts refined.}
    \label{fig:H(5)}
\end{figure}

To finally show our main result of distributivity, we shall first apply the following two results of Stanley \cite{MR2868112} to show that $\H_{b,t}(n)$ is modular. Then, having avoided an $N_5$ sublattice by Theorem 2.2 we will then show that $\H_{b,t}(n)$ has no $M_3$ sublattice, finally applying Theorem 2.1.

\begin{proposition}[Stanley]
    For a finite lattice $L$, the following statements are equivalent for $x,y\in L$:
    \begin{enumerate}
        \item $L$ is a graded lattice, with rank function $\rho$ satisfying $\rho(x)+\rho(y)\ge\rho(x\land y)+\rho(x\lor y)$.
        \
        \item if $x$ and $y$ both cover their meet $x\land y$, then they are also covered by their join $x\lor y$.
    \end{enumerate}
\end{proposition}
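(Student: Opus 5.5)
The plan is to prove the two implications separately, with the rank condition in (1) proved by induction on interval lengths. I read (2) as the upper semimodular covering condition and (1) as its rank form, following Stanley.

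\emph{Step 1: (1) implies (2).} Let $m=x\land y$ and suppose $x\neq y$ both cover $m$. Since $L$ is graded, $\rho(x)=\rho(y)=\rho(m)+1$. The inequality then gives $\rho(x\lor y)\le \rho(m)+2$. Because $x$ and $y$ are incomparable, $x\lor y>x$ and $x\lor y>y$, so $\rho(x\lor y)=\rho(m)+2$. In a graded poset, $a<c$ with $\rho(c)=\rho(a)+1$ forces $a\lessdot c$. Hence $x\lor y$ covers both $x$ and $y$.

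\emph{Step 2: (2) implies that $L$ is graded.} I will show that in every interval $[u,v]$ all maximal chains have the same length, by induction on the maximum chain length in $[u,v]$. Take two maximal chains whose first steps are $u\lessdot x$ and $u\lessdot y$. If $x=y$, apply induction in $[x,v]$. Otherwise $x\land y=u$, so by (2) the element $z=x\lor y$ covers both $x$ and $y$. Extend $u\lessdot x\lessdot z$ and $u\lessdot y\lessdot z$ by a common maximal chain of $[z,v]$. By induction in $[x,v]$, the given chain through $x$ has the same length as the one through $z$. The same holds in $[y,v]$ for the chain through $y$. Therefore both given chains have the same length, and $\rho(w)$, the length of any maximal chain in $[\hat 0,w]$, is a well-defined rank function.

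\emph{Step 3: a key lemma.} If $m\lessdot x'$, $m\le y$ and $x'\not\le y$, then $y\lessdot x'\lor y$. I prove this by induction on the length of a chain $m=y_0\lessdot y_1\lessdot\cdots\lessdot y_k=y$. Suppose inductively that $z_i=x'\lor y_i$ covers $y_i$. Then $z_i$ and $y_{i+1}$ both cover $y_i$, and $y_{i+1}\ne z_i$ since $x'\not\le y$. Their meet is therefore $y_i$, so (2) gives that $z_i\lor y_{i+1}=x'\lor y_{i+1}$ covers $y_{i+1}$.

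\emph{Step 4: the inequality.} I induct on $\rho(x)-\rho(x\land y)$. The comparable case is trivial, so assume $x,y$ are incomparable and set $m=x\land y$. Choose $x'$ with $m\lessdot x'\le x$, and let $z=x'\lor y$. By Step 3, $\rho(z)=\rho(y)+1=\rho(y)+\rho(x')-\rho(m)$. Now consider the pair $x,z$. We have $x\lor z=x\lor y$ and $x\land z\ge x'$, so $\rho(x)-\rho(x\land z)<\rho(x)-\rho(m)$. The induction hypothesis then gives
\[
\rho(x)+\rho(z)\ge \rho(x\land z)+\rho(x\lor y)\ge\rho(x')+\rho(x\lor y).
\]
Substituting the formula for $\rho(z)$ yields $\rho(x)+\rho(y)\ge\rho(m)+\rho(x\lor y)$, which is (1).

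The main obstacle I expect is Step 3, specifically checking that $y_{i+1}$ and $z_i$ are distinct covers of $y_i$, so that (2) applies with meet exactly $y_i$. Beyond that, only the bookkeeping in Step 4 needs care.
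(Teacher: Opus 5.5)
The paper gives no proof of this proposition; it is quoted from Stanley, \emph{Enumerative Combinatorics}, Vol.~1 (Proposition~3.3.2). The useful comparison is therefore with Stanley's argument. Your proof is correct. Your reading of the quantifiers, with each condition required for all $x,y\in L$, is the right one: for a single fixed pair the equivalence as printed fails, for instance for a comparable pair in $N_5$.

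Your Steps~1 and~2 are essentially Stanley's. His gradedness argument takes a non-graded interval $[u,v]$ of minimal length and joins two atoms of it using (2), which is your induction on the longest chain in another form.

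The genuine difference is in the inequality. Stanley takes a minimal counterexample, minimizing first the length of $[x\wedge y,x\vee y]$ and then $\rho(x)+\rho(y)$. Condition (2) rules out both $x$ and $y$ covering $x\wedge y$, so some $x'$ satisfies $x\wedge y<x'<x$. He then applies the inequality twice and adds:
\begin{itemize}
\item to $(x',y)$, which has the same meet and a smaller rank sum;
\item to $(x,x'\vee y)$, whose interval is shorter because $x\wedge(x'\vee y)\ge x'$.
\end{itemize}

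You instead take $x'$ to be an atom of $[x\wedge y,x]$ and prove the cover-preservation lemma of Step~3. That lemma gives $\rho(x'\vee y)=\rho(y)+1$ exactly, which replaces the first application. A single induction on $\rho(x)-\rho(x\wedge y)$, applied to the one pair $(x,z)$, then finishes.

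Stanley's route is shorter and needs no auxiliary lemma. Yours is slightly longer but isolates a standard and useful fact about upper semimodular lattices: $a\lessdot b$ implies either $a\vee c\lessdot b\vee c$ or $a\vee c=b\vee c$. This follows immediately from your Step~3 by taking $m=a$, $x'=b$ and $y=a\vee c$.

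One small point should be stated in Step~4. Step~3 requires $x'\not\le y$, which holds because otherwise $x'\le x\wedge y=m$, contradicting $m\lessdot x'$.
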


\begin{proposition}[Stanley]
    A finite lattice $L$, with rank function $\rho$, is modular if and only if for any elements $x,y\in L$ we have 
    \[
        \rho(x)+\rho(y)=\rho(x\land y)+\rho(x\lor y).
    \]
\end{proposition}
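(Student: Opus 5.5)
This is Stanley's characterization of modular lattices by rank; I will prove both directions for a finite lattice $L$. One remark first: as stated, the proposition presupposes a rank function, so $L$ is graded. I will use the equivalent order-theoretic form of modularity: $x\le z$ implies $x\lor(y\land z)=(x\lor y)\land z$.

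\emph{Rank identity implies modularity.} Assume $\rho(x)+\rho(y)=\rho(x\land y)+\rho(x\lor y)$ for all $x,y$.
\begin{enumerate}
\item Show that $L$ contains no $N_5$ and then apply Theorem 2.2. Take a pentagon with elements $a<b<c$, element $d$, $b\lor d=c\lor d=e$ and $b\land d=c\land d=a$.
\item Apply the identity to the pair $(b,d)$ and to the pair $(c,d)$. This gives
\[
\rho(b)+\rho(d)=\rho(a)+\rho(e)=\rho(c)+\rho(d).
\]
\item Hence $\rho(b)=\rho(c)$. This contradicts $b<c$, since $\rho$ is strictly increasing along strict relations in a graded poset.
\end{enumerate}

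\emph{Modularity implies the rank identity.} Assume $L$ is modular.
\begin{enumerate}
\item First use the diamond isomorphism: in a modular lattice the maps $u\mapsto u\lor y$ and $v\mapsto v\land x$ are inverse order-isomorphisms between $[x\land y,x]$ and $[y,x\lor y]$. The identities $(u\lor y)\land x=u$ for $u\in[x\land y,x]$ and $(v\land x)\lor y=v$ for $v\in[y,x\lor y]$ are exactly the modular law.
\item These isomorphisms preserve covering relations, so maximal chains correspond and have equal lengths. Therefore $\rho(x)-\rho(x\land y)=\rho(x\lor y)-\rho(y)$, which is the identity.
\item This step needs every maximal chain in an interval to have the same length, i.e.\ that $L$ is graded. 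That is given by hypothesis. If gradedness had to be proved, I would prove the Jordan--Dedekind chain condition by induction on the length of $L$, using the diamond isomorphism on two covers $x,y$ of the same element.
\end{enumerate}

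An alternative for the second direction is Proposition 3.7. From the pair $(L,L^{\ast})$ it suffices to check that $x,y$ covering $x\land y$ forces $x\lor y$ to cover both, and dually. That follows from the diamond isomorphism $[x\land y,x]\cong[y,x\lor y]$. Condition (1) of Proposition 3.7 for $L$ and for its dual then gives $\le$ and $\ge$, hence equality.

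The main obstacle is the modular $\Rightarrow$ rank-identity direction. It depends on verifying that the diamond maps are mutually inverse, with careful use of the modular law in the correct orientation.
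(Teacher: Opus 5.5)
The paper gives no proof of this proposition; it is quoted from Stanley. So there is no in-paper argument to match, and your proof has to stand on its own. It does: it is correct. Two applications of the identity to the pentagon force $\rho(b)=\rho(c)$ for $b<c$, which is impossible, and Theorem~2.2 then gives modularity. In the other direction, the maps $u\mapsto u\lor y$ and $v\mapsto v\land x$ are mutually inverse by two instances of the modular law. They therefore carry maximal chains of $[x\land y,x]$ to maximal chains of $[y,x\lor y]$.

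Your \emph{alternative} argument is essentially the route of the cited source. There, modularity of a finite lattice is upper plus lower semimodularity, and the semimodularity criterion applied to $L$ and to $L^{*}$ gives the two inequalities. In this paper that criterion is Proposition~3.8, not 3.7. This route yields gradedness as a conclusion, so it proves the stronger fact that a finite modular lattice is automatically graded and satisfies the identity. Your primary diamond-isomorphism route instead takes the rank function as given. The statement as phrased permits this, and you correctly flag it.

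For the converse, citing Dedekind's $N_5$ theorem (Theorem~2.2) is valid and short. You can avoid that external result with a direct rank count. For $x\le z$ one always has $x\lor(y\land z)\le(x\lor y)\land z$. Apply the identity to the pairs $(x,y\land z)$, $(x\lor y,z)$, $(x,y)$ and $(y,z)$, using $x\land y\land z=x\land y$ and $x\lor y\lor z=y\lor z$. Both sides then have rank $\rho(x)+\rho(y\land z)-\rho(x\land y)$, so, being comparable, they are equal.
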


\begin{theorem}
    $\H_{b,t}(n)$ under refinement ordering is a distributive lattice.
\end{theorem}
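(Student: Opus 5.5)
The plan follows the route announced just before the statement: first show that $\H_{b,t}(n)$ is modular via Stanley's two propositions, so that by Theorem 2.2 there is no $N_5$ sublattice, and then rule out $M_3$ directly, so that Theorem 2.1 gives distributivity.

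For modularity I would verify condition (2) of Proposition 3.8. Suppose $x,y$ both cover $z=x\land y$. By Lemma 3.3 and the proof of Theorem 3.7, $x$ and $y$ arise from $z$ by refining parts of distinct sizes $b^i\neq b^j$. That proof also constructs an element $\nu$ obtained by doing both refinements, and checks that $\nu\in\H_{b,t}(n)$.

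I need $\nu$ to be exactly $x\lor y$. Any common upper bound of $x$ and $y$ must contain both refinements, so it lies weakly above $\nu$. Hence $\nu=x\lor y$. Since $\nu$ is one refinement away from each of $x$ and $y$, it covers both. Proposition 3.8 then gives semimodularity with respect to the rank function $\rho$ of Lemma 3.5.

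Semimodularity alone does not give the equality required by Proposition 3.9, so I also need the dual statement: if $x,y$ are both covered by $w=x\lor y$, then both cover $x\land y$. This dual step is the main obstacle.

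I would argue it as follows. Covers below $w$ correspond to "un-refining" $b$ copies of some part $b^{k}$ into a single $b^{k+1}$. This is possible exactly when $b^{k}$ has multiplicity at least $b$ in $w$ and $b^{k+1}$ has multiplicity at most $t-1$. So $x$ and $y$ come from merging at two distinct levels $k\neq l$, or at the same level $k$ twice.

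The same-level case cannot give distinct elements, because merging $b$ copies of $b^k$ has a unique outcome. So $k\neq l$, and it remains to check that doing both merges stays within the multiplicity bounds. The delicate case is $l=k+1$. Merging at $k$ raises the multiplicity of $b^{k+1}$ by one, while merging at $k+1$ lowers it by $b$, so the net change is favourable once both individual merges are legal. The resulting element lies below both $x$ and $y$ and sits at rank $\rho(w)-2$. Any common lower bound of $x$ and $y$ is also below it, because refinements from the minimum $\hat 0$ are tracked level by level by the multiplicity counts. Hence it is $x\land y$.

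With both the covering condition and its dual in hand, the standard argument (induction on rank) gives $\rho(x)+\rho(y)=\rho(x\land y)+\rho(x\lor y)$ for all pairs. Proposition 3.9 then shows the lattice is modular.

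To exclude $M_3$, suppose $a,b',c$ pairwise have the same meet $m$ and the same join $M$, with equal rank by modularity. I would encode each element $\lambda$ by the vector recording how many refinements occurred at each level $b^i$ along any chain from $\hat 0$. This vector is well defined by the length and multiplicity bookkeeping of Lemma 3.5. The order corresponds to componentwise comparison of these vectors. Under that encoding, meet is the componentwise minimum and join the componentwise maximum; this is what the cover computations above show locally, and I would extend it by induction on rank.

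Three distinct vectors with a common pairwise componentwise min and max are impossible. At each coordinate the three values must include the min and the max, and pairwise mins equal to the global min force two of the three values to equal it, while pairwise maxes force two to equal the max. With only three values, this makes the min equal to the max at every coordinate, so all three vectors coincide.

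Thus there is no $M_3$ sublattice, and Theorem 2.1 completes the proof. In fact the vector embedding into a product of chains would give distributivity directly. The real work in either route is the claim that meet and join are computed coordinatewise, which I expect to be the hardest point to verify rigorously.
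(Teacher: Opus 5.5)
Your modularity half follows the paper. You get upper semimodularity from the two-refinement element $\nu$ built in the proof of Theorem 3.7, prove the dual ``merge'' statement, and apply Propositions 3.8 and 3.9. Your explicit check of the adjacent case $l=k+1$ is more careful than the paper's recombination remark.

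The routes split at $M_3$. The paper only considers three elements covering a common $\lambda$ and observes that their join lies three refinements up, so no interval of length two has three atoms. It does not treat an $M_3$ sublattice whose atoms need not cover its bottom; reducing to that case needs a further modular-lattice argument the paper never gives. Your embedding into a product of chains handles every diamond at once. As you note, it also yields distributivity directly, which makes the modularity half and Stanley's propositions unnecessary.

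The cost is that everything then rests on the coordinatewise claim, which you leave open. It needs no induction on rank; the steps are as follows.

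\begin{enumerate}
\item Write $n=\sum_i d_ib^i$ in base $b$, let $m_j(\lambda)$ be the multiplicity of $b^j$, and set $x_i(\lambda)=b^{-i}\bigl(\sum_{j\ge i}d_jb^j-\sum_{j\ge i}m_j(\lambda)b^j\bigr)$. This depends on $\lambda$ alone. (Lemma 3.5 only controls the total number of refinements, not the per-level counts.) It gives $m_i=d_i-x_i+bx_{i+1}$ with $x_0=0$.
\item Passing to a finer partition can only lower the total of the parts of size at least $b^i$. So $\lambda\le\mu$ forces $x(\lambda)\le x(\mu)$.
\item Conversely, suppose $x(\lambda)\le x(\mu)$ coordinatewise. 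Split $x_i(\mu)-x_i(\lambda)$ copies of $b^i$ at each level, from the top down; this turns $\lambda$ into $\mu$. There are always enough copies, since what remains at level $i$ is $m_i(\mu)\ge 0$. Definition 3.1 does not require the intermediate stages to stay in $\H_{b,t}(n)$.
\item $\H_{b,t}(n)$ is cut out by $x_i-bx_{i+1}\le d_i$ and $bx_{i+1}-x_i\le t-d_i$. Any inequality $\alpha u-\beta v\le\gamma$ with $\alpha,\beta\ge 0$ survives the coordinatewise minimum: compare with the vector having the smaller $v$. It also survives the coordinatewise maximum: compare with the vector having the larger $u$.
\end{enumerate}

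Hence $\H_{b,t}(n)$ is a sublattice of $\Z^k$, with meet and join computed coordinatewise, and distributivity follows. With these steps filled in, your argument is complete.
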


\begin{proof}
    We shall proceed by means of Theorem 2.1. To show that our lattice avoids $N_5$, we must ensure that $\H_{b,t}(n)$ is modular. We shall first show that every pair of incomparable partitions in $\H_{b,t}(n)$ that cover their meet are also covered by their join. Consider $\mu,\mu'\in\H_{b,t}(n)$ and suppose both partitions cover their meet $\lambda\in\H_{b,t}(n)$. Then, say that $\mu$ and $\mu'$ are obtained by one distinct refinement of a part $\lambda_i$ and $\lambda_j$ respectively, where $\lambda_i\neq\lambda_j$. Therefore, we find that the join $\nu$ of $\mu$ and $\mu'$ can be obtained by refining $\lambda_j$ from $\mu$, without loss of generality, so that $\nu$ covers $\mu$; a similar argument proceeds to show $\nu$ covers $\mu'$. By Proposition 3.8, we consequentially find that $\rho(\mu)+\rho(\mu')\ge\rho(\lambda)+\rho(\nu)$.\\
    
    We shall again use the rank function $\rho$, where $\rho(\lambda)$ is the number of refinements from individual parts between the minimal element $\hat{0}\in\H_{b,t}(n)$ and your chosen partition $\lambda\in\H_{b,t}(n)$. Now, suppose instead that $\mu$ and $\mu'$ are covered by their join $\nu$. Then, say that $\mu$ and $\mu'$ differ by exactly one refinement from some part $\mu_i$ in $\mu$ and some part $\mu_j'$ in $\mu'$, such that $\mu_i\neq \mu_j$, respectively to obtain $\nu$. Let $\mu_i=b^k$ for some $k>0$ and notice that $\mu_i$ has been refined into $b$ parts of size $b^{k-1}$ in $\nu$. Then, since $\mu_j'\neq\mu_i$, we  retain those $b$ parts of size $b^{k-1}$ in $\mu'$. We can subsequently combine them to obtain a part of size $b^k$ in $\lambda$, and a similar argument exists for $\mu_j'$. Therefore, we find that a part of size $\mu_i$ can be refined from $\lambda$ to obtain $\mu'$ and $\mu_j'$ can be refined from $\lambda$ to obtain $\mu$, so $\mu$ and $\mu'$ must cover their meet, which is $\lambda$. Consequentially, we get $\rho(\mu)+\rho(\mu')=\rho(\lambda)+\rho(\nu)$, so $\H_{b,t}(n)$ is modular by Proposition 3.9. Since have a modular lattice, we have successfully avoided $N_5$ by Theorem $2.2$.\\

    To handle $M_3$, let $\mu,\mu',\mu''\in\H_{b,t}(n)$ be mutually incomparable, and suppose all three partitions cover their meet $\lambda$. Then distinct parts $\lambda_i,\lambda_j,\lambda_k$ must be refined to obtain $\mu,\mu'$, and $\mu''$ respectively from $\lambda$. Then, let $\nu$ be the join of $\mu,\mu'$ and $\mu''$. Without loss of generality, notice that to obtain $\nu$ from $\mu$, we must refine the other two parts: $\lambda_j$ and $\lambda_k$. Therefore, we cannot have any join $\nu$ that covers all three partitions $\mu,\mu'$ and $\mu''$, avoiding $M_3$. Since both $N_5$ and $M_3$ are avoided, we can conclude that $\H_{b,t}(n)$ is distributive.
\end{proof}

\begin{figure}[H]
    \centering
    \[
    \begin{tikzpicture}[scale=1.25]
        \node[shape=rectangle,draw=none, inner sep=3pt] (1) at (0,0) {$\lambda$};
        \node[shape=rectangle,draw=none, inner sep=3pt] (2) at (-1,1) {$\mu$};
        \node[shape=rectangle,draw=none, inner sep=3pt] (3) at (0,1) {$\mu'$};
        \node[shape=rectangle,draw=none, inner sep=3pt] (4) at (1,1) {$\mu''$};
        \node[shape=rectangle,draw=none, inner sep=3pt] (5) at (-1,2) {$\mu\lor\mu'$};
        \node[shape=rectangle,draw=none, inner sep=3pt] (6) at (0,2) {$\mu\lor\mu''$};
        \node[shape=rectangle,draw=none, inner sep=3pt] (7) at (1,2) {$\mu'\lor\mu''$};
        \node[shape=rectangle,draw=none, inner sep=3pt] (8) at (0,3) {$\nu$};

        \node[shape=rectangle,draw=none, inner sep=3pt] (9) at (-0.7,0.3) {\scriptsize $\lambda_i$};
        \node[shape=rectangle,draw=none, inner sep=3pt] (10) at (-1.2,1.5) {\scriptsize $\lambda_j$};
        \node[shape=rectangle,draw=none, inner sep=3pt] (11) at (0.7,0.3) {\scriptsize $\lambda_k$};
        \node[shape=rectangle,draw=none, inner sep=3pt] (12) at (0.25,0.6) {\scriptsize $\lambda_j$};
        \node[shape=rectangle,draw=none, inner sep=3pt] (13) at (-0.25,1.5) {\scriptsize $\lambda_i$};
        \node[shape=rectangle,draw=none, inner sep=3pt] (14) at (0.25,1.5) {\scriptsize $\lambda_k$};
        \node[shape=rectangle,draw=none, inner sep=3pt] (15) at (1.2,1.5) {\scriptsize $\lambda_j$};
        \node[shape=rectangle,draw=none, inner sep=3pt] (16) at (-0.5,1.8) {\scriptsize $\lambda_k$};
        \node[shape=rectangle,draw=none, inner sep=3pt] (17) at (0.5,1.8) {\scriptsize $\lambda_i$};

        \draw[-] (1) -- (2);
        \draw[-] (1) -- (3);
        \draw[-] (1) -- (4);
        \draw[-] (2) -- (5);
        \draw[-] (2) -- (6);
        \draw[-] (3) -- (5);
        \draw[-] (3) -- (7);
        \draw[-] (4) -- (7);
        \draw[-] (4) -- (6);
        \draw[-] (5) -- (8);
        \draw[-] (6) -- (8);
        \draw[-] (7) -- (8);
    \end{tikzpicture}
    \]
    \caption{Near-$M_3$ subgraph of some general refinement lattice $\H_{b,t}(n)$.}
    \label{fig:H(5)}
\end{figure}

\begin{remark}
    As we can see from Figure \ref{fig:H(5)}, if $\lambda$ is a partition with three distinct covers $\mu,\mu',$ and $\mu''$ with join $\nu=\mu\lor\mu'\lor\mu''$, then the subposet given by the interval $[\lambda,\nu]$ will be isomorphic to the boolean lattice $B_3$.
\end{remark}

\section*{Acknowledgments}

The author thanks Richard P. Stanley for suggesting this project and offering useful advice throughout the process, along with Clara Chan for offering helpful feedback and review of my drafts. Additionally, she thanks Alex Postnikov, Ben Grant, Hugh Thomas and James Propp for helpful conversations throughout.

\nocite{*}

\printbibliography

@book{EC1,
  author = {Richard P. Stanley},
  year = {2011},
  title = {Enumerative Combinatorics: Volume 1},
  publisher = {Cambridge University Press}}

@article{knuth:1984,
  title={Literate Programming},
  author={Donald E. Knuth},
  journal={The Computer Journal},
  volume={27},
  number={2},
  pages={97--111},
  year={1984},
  publisher={Oxford University Press}
}

@book {MR227053,
    AUTHOR = {Birkhoff, Garrett},
     TITLE = {Lattice theory},
    SERIES = {American Mathematical Society Colloquium Publications},
    VOLUME = {Vol. XXV},
   EDITION = {Third},
 PUBLISHER = {American Mathematical Society, Providence, RI},
      YEAR = {1967}
}

@book {MR2868112,
    AUTHOR = {Stanley, Richard P.},
     TITLE = {Enumerative combinatorics. {V}olume 1},
    SERIES = {Cambridge Studies in Advanced Mathematics},
    VOLUME = {49},
   EDITION = {Second},
 PUBLISHER = {Cambridge University Press, Cambridge},
      YEAR = {2012},
     PAGES = {xiv+626}
}

@article {MR847552,
    AUTHOR = {Ziegler, G\"unter M.},
     TITLE = {On the poset of partitions of an integer},
   JOURNAL = {J. Combin. Theory Ser. A},
  FJOURNAL = {Journal of Combinatorial Theory. Series A},
    VOLUME = {42},
      YEAR = {1986},
    NUMBER = {2},
     PAGES = {215--222}
}

@article {MR570784,
    AUTHOR = {Bj\"orner, Anders},
     TITLE = {Shellable and {C}ohen-{M}acaulay partially ordered sets},
   JOURNAL = {Trans. Amer. Math. Soc.},
  FJOURNAL = {Transactions of the American Mathematical Society},
    VOLUME = {260},
      YEAR = {1980},
    NUMBER = {1},
     PAGES = {159--183}
}

@article {MR5036743,
    AUTHOR = {McConville, Thomas and Propp, James and Sagan, Bruce},
     TITLE = {Hyperbinary partitions and {$q$}-deformed rationals},
   JOURNAL = {Forum Math. Sigma},
  FJOURNAL = {Forum of Mathematics. Sigma},
    VOLUME = {14},
      YEAR = {2026},
     PAGES = {Paper No. e30, 25}
}

@article {MR389751,
    AUTHOR = {Dirdal, Gunnar},
     TITLE = {On restricted {$m$}-ary partitions},
   JOURNAL = {Math. Scand.},
  FJOURNAL = {Mathematica Scandinavica},
    VOLUME = {37},
      YEAR = {1975},
    NUMBER = {1},
     PAGES = {51--60}
}

@book {MR3450137,
    AUTHOR = {Blair, David Dakota},
     TITLE = {Counting {R}estricted {I}nteger {P}artitions},
      NOTE = {Thesis (Ph.D.)--City University of New York},
 PUBLISHER = {ProQuest LLC, Ann Arbor, MI},
      YEAR = {2015},
     PAGES = {75}
}

@article {MR1579066,
    AUTHOR = {Stern, M.},
     TITLE = {Ueber eine zahlentheoretische {F}unktion},
   JOURNAL = {J. Reine Angew. Math.},
  FJOURNAL = {Journal f\"ur die Reine und Angewandte Mathematik. [Crelle's
              Journal]},
    VOLUME = {55},
      YEAR = {1858},
     PAGES = {193--220}
}

@book {MR509213,
    AUTHOR = {Gr\"atzer, George},
     TITLE = {General lattice theory},
    SERIES = {Pure and Applied Mathematics},
    VOLUME = {75},
 PUBLISHER = {Academic Press, Inc. [Harcourt Brace Jovanovich, Publishers],
              New York-London},
      YEAR = {1978},
     PAGES = {xiii+381}
}

@misc{sagan2026orderedsetpartitionposets,
      title={Ordered set partition posets}, 
      author={Saga, B. E. n and Sundaram, S},
      year={2026},
      eprint={2506.23355},
      archivePrefix={arXiv},
      primaryClass={math.CO},
      url={https://arxiv.org/abs/2506.23355}, 
}

@article {MR850222,
    AUTHOR = {Calderbank, A. R. and Hanlon, P. and Robinson, R. W.},
     TITLE = {Partitions into even and odd block size and some unusual
              characters of the symmetric groups},
   JOURNAL = {Proc. London Math. Soc. (3)},
  FJOURNAL = {Proceedings of the London Mathematical Society. Third Series},
    VOLUME = {53},
      YEAR = {1986},
    NUMBER = {2},
     PAGES = {288--320}
}

@article {Sun94b,
    AUTHOR = {Sheila, S.},
     TITLE = {The Homology Representations of the Symmetric Group on Cohen-Macaulay Subposets of the Partition Lattice},
   JOURNAL = {Advances in Mathematics},
    VOLUME = {104},
      YEAR = {1994},
     PAGES = {225--296},
}

@incollection {MR1310588,
    AUTHOR = {Sundaram, Sheila},
     TITLE = {Applications of the {H}opf trace formula to computing homology
              representations},
 BOOKTITLE = {Jerusalem combinatorics '93},
    SERIES = {Contemp. Math.},
    VOLUME = {178},
     PAGES = {277--309},
 PUBLISHER = {Amer. Math. Soc., Providence, RI},
      YEAR = {1994},
}

@article {MR4895059,
    AUTHOR = {Orellana, Rosa and Saliola, Franco and Schilling, Anne and
              Zabrocki, Mike},
     TITLE = {The lattice of submonoids of the uniform block permutations
              containing the symmetric group},
   JOURNAL = {Semigroup Forum},
  FJOURNAL = {Semigroup Forum},
    VOLUME = {110},
      YEAR = {2025},
    NUMBER = {2},
     PAGES = {405--421}
}

\end{document}